\newcommand{\Z}{\mathbb{Z}}
\newcommand{\R}{\mathbb{R}}
\newcommand{\N}{\mathbb{N}}
\newcommand{\C}{\mathbb{C}}
\numberwithin{equation}{section}
\numberwithin{figure}{section}
\theoremstyle{plain} 
\newtheorem{theorem}{Theorem}[section]
\newtheorem{proposition}[theorem]{Proposition}
\theoremstyle{definition} 
\newtheorem{definition}[theorem]{Definition}
\title{A note on balance equations for doubly periodic minimal surfaces}
\author
{Peter Connor}
\address{Peter Connor\\Department of Mathematical Sciences\\Indiana University South Bend\\South
Bend\\IN 46634\\USA}
\subjclass[2010]{Primary 53C43; Secondary 53C45}
\date{\today}
\begin{document}

\begin{abstract}
Most known examples of doubly periodic minimal surfaces in $\R^3$ with parallel ends limit as a foliation of $\R^3$ by horizontal noded planes, with the location of the nodes satisfying a set of balance equations.  Conversely, for each set of points providing a balanced configuration, there is a corresponding three-parameter family of doubly periodic minimal surfaces.    In this note we derive a differential equation that is equivalent to the balance equations for doubly periodic minimal surfaces.  This allows for the generation of many more solutions to the balance equations, enabling the construction of increasingly complicated surfaces.
\end{abstract}

\maketitle

\section{Introduction}
Many doubly periodic minimal surfaces in $\R^3$ with parallel ends limit as a foliation of parallel planes connected by tiny catenoid necks that shrink to nodes at the limit.  This was the case with the first examples of genus one constructed by Karcher \cite{ka4} and Meeks and Rosenberg \cite{mr3}, and of genus two constructed by Wei \cite{wei2}.  It was also the case with the surfaces constructed in \cite{cw1}, in which the author and Weber proved that for any genus $g\geq 1$ and any even number $N\geq2$ there are three-parameter families of embedded doubly periodic minimal surfaces of genus $g$ and $2N$ parallel ends.  Each family of surfaces is constructed in a neighborhood of a noded limit.  Given a set of points in the complex plane that satisfy a set of balance equations, theorem 2.1 in \cite{cw1} provides a three-parameter family of surfaces that geometrically look like parallel planes connected by periodically placed catenoid necks, with the location of the necks given by the solutions to the balance equations.  See figure \ref{figure:(1,n)}.

Solving the balance equations proved difficult due to the fact that there are many equivalent solutions by permuting the locations of the nodes at a given level.  Employing techniques used by Traizet in \cite{tr4,tr8} to find balance configurations for minimal surfaces with finite total curvature, the balance equations can be combined into a differential equation that mitigates this difficulty.  This note demonstrates how to do so with the doubly periodic balance equations.

In section 2, we discuss forces, balance equations, and the known balanced configurations for doubly periodic minimal surfaces.  In section 3, we prove that the balance equations are equivalent to a second order differential equation.  In section 4, we examine configurations of type $(2,n)$.  In section 5, we examine configurations of type $(3,4)$, which is the smallest configuration with no non-trivial symmetries.

\begin{figure}[h]
	\centerline{ 
		\includegraphics[height=2in]{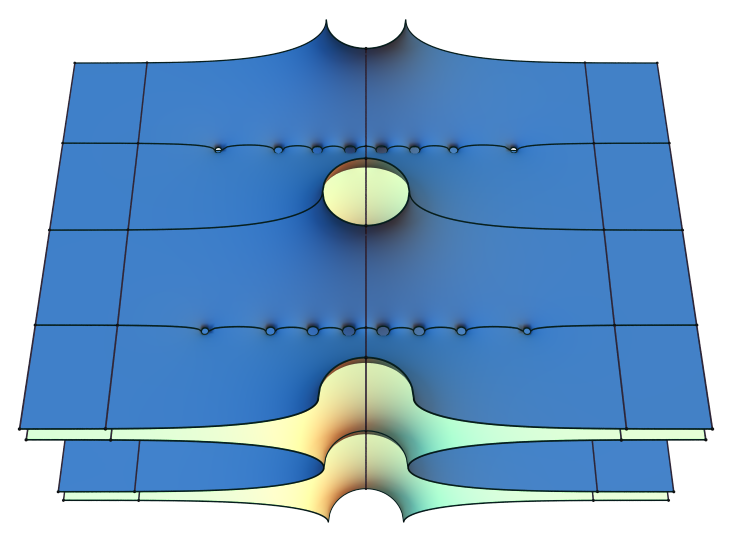}
		\includegraphics[height=2in]{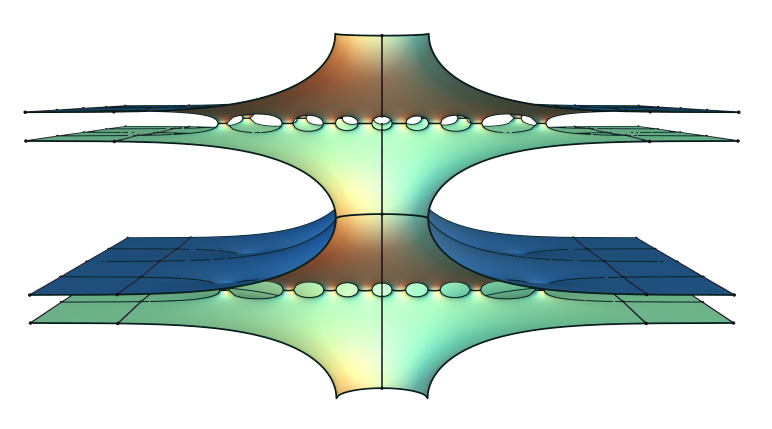}
			}
	\caption{Two views of a genus 8 doubly periodic minimal surface}
	\label{figure:(1,n)}
\end{figure}

\section{Forces and Balance Equations}
A doubly periodic minimal surface $M$ in $\R^3$ is invariant under two linearly independent translations given by a two dimensional lattice $\Lambda$.  There is a corresponding minimal surface $\tilde{M}$ in the quotient space $\R^3/\Lambda$, from which one can recover $M$.  Assume that the generators of $\Lambda$ are the vector $(0,2\pi,0)$ and a non-horizontal vector and that the ends of the surface have vertical limiting normal.  Then, each level of the quotient surface has domain $\C/(2\pi i\Z)$.  For convenience of calculations, this is identified with $\C^*=\C-\{0\}$ via the exponential map.

Consider $N$ copies of $\mathbb{C}^*$, labeled $\mathbb{C}_k^*$ for $k=1,\ldots,N$, which correspond to the different levels of the surface.  The ends of the surface are placed at $0_k=0$ and $\infty_k=\infty$ in $\overline{\C}_k$.  On each $\mathbb{C}_k^*$, place $n_k$ points $p_{k,1},\ldots,p_{k,n_k}$.  Extend this  definition of $p_{k,i}$ for any integer $k$ by making it  periodic in the sense that $p_{k+N,i}=p_{k,i}$ for $k=1,\ldots,N$ and $i=1,\ldots,n_k$, with $n_{k+N}=n_k$.  Each point $\tilde{p}_{k,i}=\log p_{k,i}$ corresponds to the location of a catenoid shaped neck between the $k-1$ an $k$ levels of the surface.  

Given a family of doubly periodic minimal surfaces that limits as a foliation of noded planes, the location of the nodes must satisfy a balancing condition given in terms of the following force equations.

\begin{definition}
The {\it force} exerted on $p_{k,i}$ by the other points in $\{p_{k,i}\}$ is defined by
\[
F_{k,i}:=\sum_{j \neq i}\frac{p_{k,i}+p_{k,j}}{n_k^2(p_{k,i}-p_{k,j})}-\sum_{j=1}^{n_{k+1}}\frac{p_{k,i}+p_{k+1,j}}{2 n_kn_{k+1}\left(p_{k,i}-p_{k+1,j}\right)}-\sum_{j=1}^{n_{k-1}}\frac{p_{k,i}+p_{k-1,j}}{2 n_kn_{k-1}\left(p_{k,i}-p_{k-1,j}\right)}.
\]
The equations $F_{k,i}=0$ are referred to as {\it balance equations}.
\end{definition}

\begin{definition}
The configuration $\{p_{k,i}\}$ is called a \textit{balanced configuration} if $F_{k,i}=0$ for $k=1,\ldots,N$ and $i=1,\ldots,n_k$.  It is a balanced configuration of type $(n_1,n_2,\ldots,n_N)$.
\end{definition}

\begin{definition}
A configuration $\{p_{k,i}\}$ is said to be {\it non-degenerate} if the Jacobian matrix $\partial F_{k,i}/\partial p_{j,h}$ has complex rank $m-1$, where $\displaystyle m=\sum_{k=1}^N n_k$.
\end{definition}

The Jacobian matrix can't have full rank $2m$ because $$\sum_{k=1}^N\sum_{i=1}^{n_k}F_{k,i}=0.$$  This holds whether or not the configuration $\{p_{k,i}\}$ is balanced. 

Theorem 2.1 from \cite{cw1} states that, given a non-degenerate balanced configuration $\{p_{k,i}\}$, there exists a three-parameter family of embedded doubly periodic minimal surfaces that limit as a foliation of $\R^3$ by horizontal noded planes.  Each quotient surface has genus 
\[
g=1+\sum_{k=1}^N(n_k-1)
\]
and $2N$ ends asymptotic to flat cylinders, two at each of the $N$ levels.  There are $n_k$ catenoid necks joining the $k-1$ and $k$ levels, with the horizontal position of the necks given by the terms $\tilde{p}_{k,i}=\log p_{k,i}$, $i=1,2,\ldots, n_k$.

When the surfaces are viewed in $\R^3$, there are infinitely many levels, with the height of level $N+k$ equal to the sum of the heights of level $N$ and level $k$.  Also, there are infinitely many periodically placed necks between successive levels, with the horizontal locations $\tilde{p}_{k,i}$ of the necks periodic with respect to the translation vector $(0,2\pi,0)$. 

Theorem 2.1 was proven by constructing the Weierstrass representation for the desired surfaces in a neighborhood of a noded limit and solving the period problem on the noded limit.  Part of solving the period problem is having a balanced configuration.  The configuration being non-degenerate allows the use of the implicit function theorem to solve the period problem in an open neighborhood of the noded limit.

In \cite{cw1}, non-degenerate balanced configurations were shown to exist when $N=2$, $n_1=1$, and $n_2=n$ for any $n\in\mathbb{N}$.  On each quotient surface, this data corresponds to two levels, each with two Scherk ends.  Between the levels, there are catenoid necks.  From level one to level two, there are $n$ necks.  From level two to level three (level one in the quotient), there is one neck.  We refer to these as $(1,n)$ configurations, designating two levels with $1$ and $n$ necks between successive levels.  The surface in figure \ref{figure:(1,n)} corresponds to a $(1,8)$ balanced configuration.  

For each $n\in\N$ there is only one $(1,n)$ balanced configuration.  The location of the nodes are $p_{1,1}$ and $p_{2,k}$, $k=1,2,\ldots,n$, with $p_{1,1}=1$ and the $p_{2,k}$ corresponding to roots of the polynomial
\[
p_n(z)=\sum_{k=0}^n{n \choose k}^2z^k.
\]

It was also proven that sequences of this type of configuration can be concatenated to produce a new non-degenerate balanced configuration.  If there exist non-degenerate balanced configurations of type $(1,n_j)$  for $j=1,2,\ldots,m$ then they can be combined to create a non-degenerate balanced configuration of type $(1,n_1,1,n_2,\ldots,1,n_m)$, with corresponding embedded, doubly periodic minimal surface with $2m$ levels and the number of necks between successive levels alternating between $1$ and the integers $n_j$.  

Two $(2,3)$ balanced configurations were discovered, which led to the question of whether there are always balanced configurations of the form $(m,n)$ with $1\leq m\leq n$.  Numerical evidence indicates that the number of balanced configurations of a fixed type $(m,n)$ increases as $m$ increases.  The locations of the necks of the surface in figure \ref{figure:(2,13)} are given by one of the seven balanced configurations of type $(2,13)$.

\begin{figure}[h]
	\centerline{ 
		\includegraphics[height=2in]{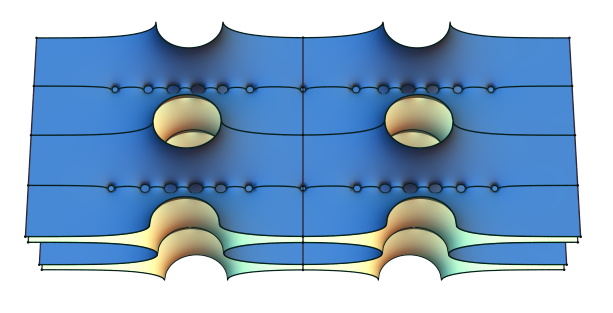}
			}
	\caption{Surface corresponding to a $(2,13)$ balanced configuration}
	\label{figure:(2,13)}
\end{figure}

\section{An alternative to the balance equations}
The balance equations corresponding to more complicated configurations such as those of type $(n_1,n_2)$ with $1<n_1<n_2$ are very difficult to solve algebraically.  In \cite{tr4,tr8}, Traizet combined a set of balance equations for minimal surfaces in $\R^3$ with finite total curvature into one differential equation.  One solution of the differential equation corresponds to many equivalent balanced configurations by permutation of the nodes at each level, and so it is much easier to find balanced configurations by solving the corresponding differential equation.  We use Traizet's method to find a differential equation corresponding to the balance equations for doubly periodic minimal surfaces.   

\begin{theorem}
Let $N$ be an even positive integer, $n_1,n_2,\ldots, n_N\in\N$, and suppose $\{p_{k,i}\}$ is a configuration such that the $p_{k,i}$ are distinct.  Let   
\[
P_k(z)=\prod_{i=1}^{n_k}(z-p_{k,i}),\,P(z)=\prod_{k=1}^N P_k(z)
\]
and
\[
Q(z)=\sum_{k=1}^N\left(\frac{zP_k''(z)P(z)}{n_k^2P_k(z)}-\frac{zP_k'(z)P_{k+1}'(z)P(z)}{n_kn_{k+1}P_k(z)P_{k+1}(z)}+\frac{P_k'(z)P(z)}{n_k^2P_k(z)}\right).
\]
Then the configuration $\{p_{k,i}\}$ is balanced if and only if $Q(z)\equiv 0$.
\end{theorem}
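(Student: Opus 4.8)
The plan is to show that $Q$ is a polynomial of degree at most $m-1$, where $m=\sum_{k=1}^N n_k$, and that its value at each $p_{k,i}$ is a nonzero scalar multiple of the force $F_{k,i}$. The equivalence is then immediate: a polynomial of degree at most $m-1$ that vanishes at the $m$ distinct points $p_{k,i}$ must be identically zero, and conversely $Q\equiv 0$ forces every $F_{k,i}=0$.

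First I would check polynomiality and the degree bound. Since the $p_{k,i}$ are pairwise distinct, $P_1,\dots,P_N$ are pairwise coprime, so $P_k$ divides $P$ and $P_kP_{k+1}$ divides $P$; throughout, the level index is read modulo $N$ via $P_{k+N}=P_k$, which is consistent because $N\ge 2$ forces $P_k\neq P_{k+1}$. Hence each of the three families of summands defining $Q$ is an honest polynomial, and a one-line degree count gives degree at most $(\deg P)-1=m-1$ for each of them, so $\deg Q\le m-1$.

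The core of the argument is the evaluation $Q(p_{k,i})$. Fix $k,i$ and put $p=p_{k,i}$. Every summand of $Q$ carries a polynomial factor of the form $P(z)/P_l(z)$ or $P(z)/\big(P_l(z)P_{l+1}(z)\big)$; such a factor still contains $P_k$ — and hence vanishes at $z=p$ — unless its omitted level indices include $k$. So at $z=p$ only a few summands survive: the $l=k$ summand of each of the three kinds, and the $l=k-1$ summand of the middle kind (when $N=2$ the values $k-1$ and $k+1$ coincide modulo $N$, which is exactly why $F_{k,i}$ counts the neighboring level twice). Collecting these surviving terms and dividing out the nonzero quantity $\prod_{l\neq k}P_l(p)=P'(p)/P_k'(p)$, one is left with
\[
\frac{Q(p)}{\prod_{l\neq k}P_l(p)}
=\frac{P_k'(p)+p\,P_k''(p)}{n_k^2}
-\frac{p\,P_k'(p)}{n_kn_{k+1}}\cdot\frac{P_{k+1}'(p)}{P_{k+1}(p)}
-\frac{p\,P_k'(p)}{n_{k-1}n_k}\cdot\frac{P_{k-1}'(p)}{P_{k-1}(p)}.
\]
Next I would invoke the elementary identities $P_l'(z)/P_l(z)=\sum_j 1/(z-p_{l,j})$ and $P_k''(p)/P_k'(p)=2\sum_{j\neq i}1/(p-p_{k,j})$, together with the rewriting $(p_{k,i}+q)/(p_{k,i}-q)=2p_{k,i}/(p_{k,i}-q)-1$ applied to every term of $F_{k,i}$; a direct comparison then shows the right-hand side above equals $P_k'(p)\,F_{k,i}$, so $Q(p_{k,i})=P'(p_{k,i})\,F_{k,i}$. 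As the $p_{k,i}$ are distinct, each is a simple root of $P$, so $P'(p_{k,i})\neq 0$, and the two implications of the theorem follow as indicated above.

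I expect the main obstacle to be the bookkeeping in the residue computation identifying the displayed expression with $P_k'(p)\,F_{k,i}$, and in particular the constant terms: the constant $-(n_k-1)/n_k^2$ produced by the self-interaction sum in $F_{k,i}$ and the constant $+1/(2n_k)$ produced by each of the two neighbor sums combine to $1/n_k^2$, which is precisely the contribution of the summand $P_k'(z)P(z)/(n_k^2P_k(z))$ in the definition of $Q$ — the term whose presence is what makes the identity balance. Care is also needed to keep the wrap-around indices $k\pm 1\bmod N$ straight, especially in the $N=2$ case where the two neighbor contributions coincide.
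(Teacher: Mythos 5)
Your proposal is correct and follows essentially the same route as the paper: rewrite each term of $F_{k,i}$ via $(p+q)/(p-q)=2p/(p-q)-1$ and the logarithmic-derivative identities, observe that at $z=p_{k,i}$ only the summands of $Q$ omitting level $k$ (and the wrap-around $l=k-1$ middle term) survive so that $Q(p_{k,i})$ is a nonzero multiple of $F_{k,i}$, and finish with the degree bound $\deg Q\le m-1$ against $m$ distinct roots. The only cosmetic difference is that the paper packages the surviving terms as intermediate polynomials $Q_k$ with $Q(p_{k,i})=Q_k(p_{k,i})$, while you compute $Q(p_{k,i})=P'(p_{k,i})F_{k,i}$ directly.
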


\begin{proof}
An equivalent expression for the force $F_{k,i}$ is given by
\[
\frac{2p_{k,i}}{n_k^2}\sum_{j \neq i}\frac{1}{p_{k,i}-p_{k,j}}-\frac{p_{k,i}}{n_kn_{k+1}}\sum_{j=1}^{n_{k+1}}\frac{1}{p_{k,i}-p_{k+1,j}}-\frac{p_{k,i}}{n_{k-1}n_k}\sum_{j=1}^{n_{k-1}}\frac{1}{p_{k,i}-p_{k-1,j}}+\frac{1}{n_k^2}.
\]
Since the $p_{k,i}$ are distinct for each $k$,
\[
\frac{P_k''(p_{k,i})}{P_k'(p_{k,i})}=\sum_{j\neq i}\frac{2}{p_{k,i}-p_{k,j}},
\]

\[
\frac{P_{k+1}'(p_{k,i})}{P_{k+1}(p_{k,i})}=\sum_{j=1}^{n_{k+1}}\frac{1}{p_{k,i}-p_{k+1,j}},
\]

\[
\frac{P_{k-1}'(p_{k,i})}{P_{k-1}(p_{k,i})}=\sum_{j=1}^{n_{k-1}}\frac{1}{p_{k,i}-p_{k-1,j}},
\]
and the force equations can be rewritten in terms of the polynomials $P_k$:
\[
F_{k,i}=\frac{p_{k,i}P_k''(p_{k,i})}{n_k^2P_k'(p_{k,i})}-\frac{p_{k,i}P_{k+1}'(p_{k,i})}{n_kn_{k+1}P_{k+1}(p_{k,i})}-\frac{p_{k,i}P_{k-1}'(p_{k,i})}{n_kn_{k-1}P_{k-1}(p_{k,i})}+\frac{1}{n_k^2}
\]
Substituting $z$ for $p_{k,i}$ and multiplying by 
\[
\frac{P_k'(z)P(z)}{P_k(z)}
\]
we get the polynomial
\[
Q_k(z)=\frac{zP_k''(z)P(z)}{n_k^2P_k(z)}-\frac{zP_k'(z)P_{k+1}'(z)P(z)}{n_kn_{k+1}P_k(z)P_{k+1}(z)}-\frac{zP_{k-1}'(z)P_k'(z)P(z)}{n_kn_{k-1}P_{k-1}(z)P_k(z)}+\frac{P_k'(z)P(z)}{n_k^2P_k(z)},
\]
and for each $i=1,2,\ldots, n_k$, $F_{k,i}=0$ if and only if $Q_k(p_{k,i})=0$.  

Then,
\[
Q(z)=\sum_{k=1}^N\left(\frac{zP_k''(z)P(z)}{n_k^2P_k(z)}-\frac{zP_k'(z)P_{k+1}'(z)P(z)}{n_kn_{k+1}P_k(z)P_{k+1}(z)}+\frac{P_k'(z)P(z)}{n_k^2P_k(z)}\right)
\]
is a polynomial with degree less than $m=\sum n_k$, and $Q(p_{k,i})=Q_k(p_{k,i})$ for $i=1,2,\ldots, n_k$ and $k=1,2,\ldots,N$.

If $Q(z)\equiv 0$ then $Q_k(p_{k,i})=0$ and $F_{k,i}=0$ for $i=1,2,\ldots,n_k$ and $k=1,2,\ldots,N$, and so the configuration $\{p_{k,i}\}$ is balanced.  If the configuration $\{p_{k,i}\}$ is balanced then $Q(p_{k,i})=Q_k(p_{k,i})=F_{k,i}=0$.  Thus, $Q(z)$ has degree less than $m$ and at least $m$ distinct roots, and so $Q(z)\equiv 0$.
\end{proof}

Note that if we re-express
\[
P_k(x)=\sum_{i=0}^{n_k}a_{k,i}z^i
\]
then the $Q(z)\equiv 0$ is a system of at most $m-1$ equations with $m$ variables $a_{k,i}$.  

\subsection{Configurations of type $(n_1,n_2)$}
If $N$ =2 then, after multiplying by $n_1^2n_2^2$, $Q(z)$ is given by
\[
n_2^2zP_1''(z)P_2(z)+n_1^2zP_2''(z)P_1(z)-2n_1n_2zP_1'(z)P_2'(z)+n_2^2P_1'(z)P_2(z)+n_1^2P_2'(z)P_1(z).
\]

With some extra assumptions, the non-degeneracy of configurations of type $(n_1,n_2)$ is guaranteed.
\begin{proposition}
If $p_{k,i}\in\R$ with $p_{1,i}>0$ for $i=1,2,\ldots,n_1$ and $p_{2,i}<0$ for $i=1,2,\ldots,n_2$ then the configuration $\{p_{k,i}\}$ is non-degenerate.
\label{prop:nondegen}
\end{proposition}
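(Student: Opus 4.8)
The plan is to pass to the logarithmic coordinates $\tilde p_{k,i}=\log p_{k,i}$, in which the Jacobian of the forces becomes a \emph{real symmetric} matrix having the structure of a complete weighted graph Laplacian; the sign hypothesis on the $p_{k,i}$ makes every edge weight strictly positive, and positive semidefiniteness of such a Laplacian with kernel exactly the constants yields the rank statement.

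In detail, write $m=n_1+n_2$ and, for multi-indices $K=(k,i)$ and $L=(j,h)$, set $A_{KL}=p_L\,\partial F_K/\partial p_L$. Then $A=J\,\mathrm{diag}(p_L)$, where $J=(\partial F_K/\partial p_L)$; since every $p_L\neq0$ the diagonal factor is invertible, so $\operatorname{rank}_{\C}J=\operatorname{rank}_{\C}A$ and it suffices to prove $\operatorname{rank}_{\C}A=m-1$. From the formula for $F_{k,i}$ with $N=2$ one checks three facts by short direct computations: (i) $A$ is symmetric — passing to the $\tilde p$ variables is exactly what symmetrizes $\partial F_K/\partial p_L$, and in fact $A$ is the Hessian in the variables $\tilde p_{k,i}$ of the logarithmic energy whose gradient is the force; (ii) $A\mathbf 1=0$, where $\mathbf 1=(1,\dots,1)$, because $F_{k,i}(\lambda p)=F_{k,i}(p)$, so that equivalently $A_{KK}=-\sum_{L\neq K}A_{KL}$; and (iii) under the hypothesis each off-diagonal entry $A_{KL}$ is a strictly positive real number: within a level $A_{(k,i),(k,j)}$ is a positive multiple of $p_{k,i}p_{k,j}/(p_{k,i}-p_{k,j})^2$, which is positive since the two points have the same sign, while across levels $A_{(1,i),(2,j)}$ is a positive multiple of $-p_{1,i}p_{2,j}/(p_{1,i}-p_{2,j})^2$, which is positive since $p_{1,i}>0>p_{2,j}$; all denominators are nonzero because the $p_{k,i}$ are distinct and the two levels lie on opposite sides of the origin.

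Given (i)--(iii), the row-sum-zero identity rewrites the quadratic form as
\[
v^{T}Av=-\tfrac{1}{2}\sum_{K\neq L}A_{KL}(v_K-v_L)^2 ,
\]
which is $\leq 0$ for real $v$ and vanishes precisely when $v_K=v_L$ for all $K,L$, i.e. exactly on $\R\mathbf 1$. Hence the real symmetric matrix $A$ is negative semidefinite with one-dimensional kernel $\R\mathbf 1$, so $\operatorname{rank}_{\R}A=m-1$; rank is unchanged under the extension $\R\subset\C$, so $\operatorname{rank}_{\C}J=\operatorname{rank}_{\C}A=m-1$ and the configuration is non-degenerate. (Balancedness of $\{p_{k,i}\}$ is never used.)

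The only genuinely essential use of the hypothesis is the sign determination in (iii); everything else is formal. The step requiring the most care is the bookkeeping behind (i) and (ii): verifying $p_L\,\partial_{p_L}F_K=p_K\,\partial_{p_K}F_L$ amounts to matching the $(k{+}1)$-sum in $F_{k,i}$ against the $(k{-}1)$-sum in $F_{k+1,j}$, and one must remember that for $N=2$ each of the two levels is adjacent to the other on both sides, so the off-diagonal cross-level coefficient is effectively doubled — harmless for the sign, but easy to mishandle.
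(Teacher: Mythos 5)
Your proof is correct, and it takes a genuinely different (and more careful) route than the paper. The paper argues directly on $J=(\partial F_{k,i}/\partial p_{j,h})$: the identity $\sum_{k,i}F_{k,i}\equiv 0$ bounds the rank above by $m-1$, and the rank is pinned from below by asserting that the principal $(m-1)\times(m-1)$ submatrix obtained by deleting the last row and column is strictly diagonally dominant, hence invertible. You instead pass to $A=J\,\mathrm{diag}(p_L)$ (the Jacobian in the coordinates $\tilde p=\log p$), check that $A$ is symmetric with zero row sums (Euler's relation for the degree-$0$ homogeneity of each $F_{k,i}$) and strictly positive off-diagonal entries under the sign hypothesis --- $A_{(k,i),(k,j)}=2p_{k,i}p_{k,j}/\bigl(n_k^2(p_{k,i}-p_{k,j})^2\bigr)$ within a level and $A_{(1,i),(2,j)}=-2p_{1,i}p_{2,j}/\bigl(n_1n_2(p_{1,i}-p_{2,j})^2\bigr)$ across levels, with the factor of $2$ you correctly flagged from the double adjacency at $N=2$ --- and then conclude from the Laplacian quadratic-form identity that $A$ is negative semidefinite with kernel exactly $\R\mathbf 1$. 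Two remarks on what each approach buys. First, your normalization step is not merely cosmetic: the raw Jacobian $J$ need not have a diagonally dominant principal submatrix (e.g.\ $n_1=2$, $n_2=1$ with $p_{1,1}=10$, $p_{1,2}=1$, $p_{2,1}$ a small negative number violates dominance in the first row), whereas the submatrix of $A$ is strictly diagonally dominant precisely because $A$ has zero row sums and positive off-diagonal entries; so the diagonal-dominance claim the paper labels ``easy to see'' really lives on your symmetrized matrix, and your argument supplies the missing justification. Second, your version identifies the kernel exactly as the scaling direction $\R\mathbf 1$ rather than only sandwiching the rank, at the modest cost of verifying symmetry. The paper's route, once the dominance is applied to the correct matrix, is marginally shorter since it needs only one designated row/column deletion and the Levy--Desplanques theorem.
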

\begin{proof}
If $p_{k,i}\in\R$ with $p_{1,i}>0$ for $i=1,2,\ldots,n_1$ and $p_{2,i}<0$ for $i=1,2,\ldots,n_2$ then the Jacobian matrix $\partial F_{k,i}/\partial p_{j,h}$ is a $(n_1+n_2)\times(n_1+n_2)$ matrix, and it is easy to see that the submatrix obtained by removing the last row and column is strongly diagonally dominant.  Hence, the Jacobian matrix has rank $n_1+n_2-1$, and the configuration $p_{k,i}$ is non-degenerate. 
\end{proof}
Otherwise, the non-degeneracy of a given balanced configuration can be checked on a case by case basis.
\section{Configurations of type $(2,n)$}
Consider the case when $N=2$, $n_1=2$, and $n_2=n\geq 2$.  After rescaling and translating, we can assume that $p_{1,2}=1/p_{1,1}$.  Then
\[
P_1(z)=(z-p_{1,1})(z-p_{1,2})=z^2-\alpha z+1,
\] 
\[
P_2(z)=\prod_{i=1}^n(z-p_{2,i})=\sum_{i=0}^n a_iz^i,
\]
and
\[
Q(z)=4(z^3-\alpha z^2+z)P_2''(z)+4\left((1-2n)z^2+(\alpha n-\alpha)z+1\right)P_2'(z)+n^2(4z-\alpha)P_2(z).
\]

Finding balanced  $(2,n)$ configurations corresponds to finding a $\alpha\in\R$ and polynomial $P_2(z)$ such that $Q(z)\equiv 0$ and the roots of $P_1(z)P_2(z)$ are distinct.   

In this case, $Q(z)$ is a polynomial of degree at most $n+1$.  If we re-express 
\[
Q(z)=\sum_{i=0}^{n+1}b_i z^i
\]
then
\[
b_k=4a_{k-1}\left(k-n-1\right)^2-\alpha a_k(2k-n)^2+4a_{k+1}(k+1)^2
\]
with $a_k=0$ for $k>n$.

We want $Q(z)\equiv 0$, which is the same as $b_k=0$ for $0\leq k\leq n+1$, and 

\[
\begin{split}
b_{n+1}&=0\Leftrightarrow4a_n(n+1-n-1)^2=0\\
\end{split}
\]
and
\[
b_k=0\Leftrightarrow a_{k-1}=\frac{\alpha a_k(2k-n)^2-4a_{k+1}(k+1)^2}{4(k-n-1)^2}
\]
for $k=1,2,\ldots,n$.
Starting with $a_n=1$ and $\displaystyle a_{n-1}=\frac{\alpha n^2}{4}$, we can recursively define $a_{n-k}$ for $k=1,2,\ldots,n$.  Each $a_k$ is a polynomial with respect to $\alpha$ of degree at most $n-k$, call them $a_k=A_k(\alpha)$.

Thus, $\{p_{k,i}\}$ provides a balanced $(2,n)$ configuration if 
\[
P_1P_2(z)=\left(z^2-\alpha z+1\right)\left(\sum_{i=0}^nA_k(\alpha)z^i\right)
\]
has distinct roots.  This can be checked on a case by case basis.  Numerical evidence suggests that for each $n\in\N$ there is one balanced configuration of type $(2,2n+1)$ with $p_{1,1},p_{1,2}>0$ and $p_{2,i}<0$ for $i=1,2,\ldots,n$.  By proposition \ref{prop:nondegen}, this configuration is non-degenerate.  The non-degeneracy of other examples can be checked on a case by case basis.

\subsection{(2,4) Balanced Configurations}
If $n=4$ then $Q(z)\equiv 0$ when
\[
\begin{split}
b_4&=0\Leftrightarrow a_3=4\alpha\\
b_3&=0\Leftrightarrow a_2=\alpha^2-4\\
b_2&=0\Leftrightarrow a_1=-4\alpha\\
b_1&=0\Leftrightarrow a_0=-\frac{1}{2}(\alpha^2-2)\\
b_0&=0\Leftrightarrow \frac{1}{8}\alpha(\alpha^2-4)=0,
\end{split}
\]
and $\frac{1}{8}\alpha(\alpha^2-4)=0$ has roots $0$ and $\pm 2$.  However, $\pm 2$ don't work because then $P_1(z)$ has repeated root $z=1$ or $z=-1$.  If $\alpha=0$ then
\[
P_1(z)=z^2+1,\,P_2(z)=z^4-4z^2+1.
\] 
Then $P_1(z)$ has roots $\pm i$ and $P_2(z)$ has roots $\pm\sqrt{2-\sqrt{3}},\pm\sqrt{2+\sqrt{3}}$.  Hence, the nodes are located at 
\[
\pm\frac{\pi}{2}i, \frac{1}{2}\log(2+\sqrt{3}), \frac{1}{2}\log(2-\sqrt{3}), \frac{1}{2}\log(2+\sqrt{3})+\pi i,\frac{1}{2}\log(2-\sqrt{3})+\pi i.
\]  

However, the $(1,2)$ configuration has $p_{1,1}=1$, $p_{2,1}=-2+\sqrt{3}$, and $p_{2,2}=-2-\sqrt{3}$, with the location of the nodes 
\[
0,\log(2+\sqrt{3})+\pi i,\log(2-\sqrt{3})+\pi i.
\]
Thus, if we rescale the $(2,4)$ configuration by $2$ and translate by $\pi i$, we get the $(1,2)$ configuration.  See figure \ref{figure:(2,4)config}.

\begin{figure}[h]
    \centering
    \begin{subfigure}[b]{0.48\textwidth}
       \centering
        \includegraphics[width=.8\textwidth]{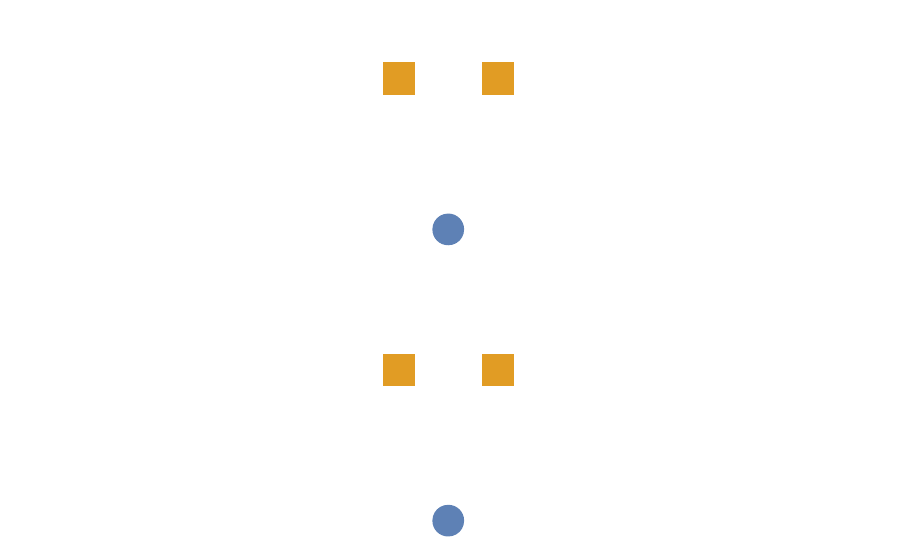}
        \caption{$(2,4)$ balanced configuration}
        \label{figure:(2,4)}
    \end{subfigure}
    ~ 
    \begin{subfigure}[b]{0.48\textwidth}
    	    \centering
        \includegraphics[width=.8\textwidth]{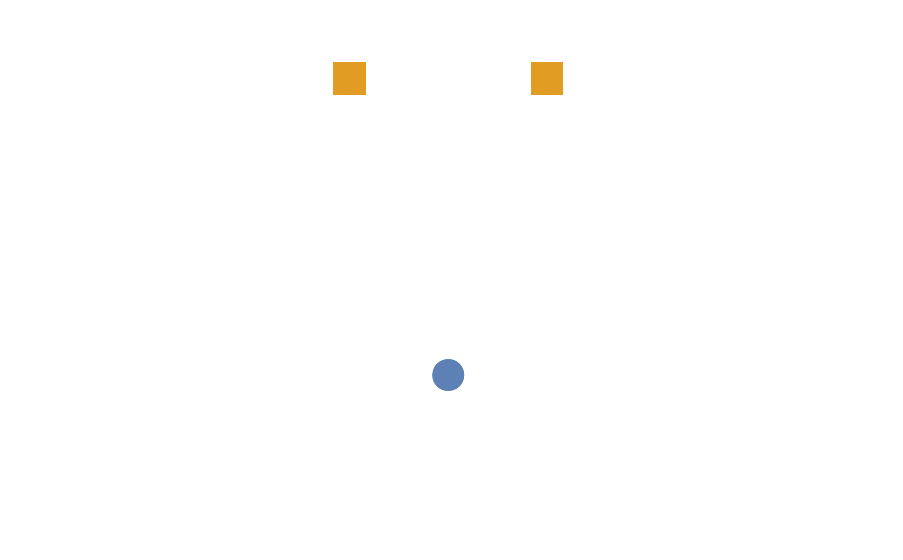}
        \caption{$(1,2)$ balanced configuration}
        \label{figure:(1,2)}
    \end{subfigure}
       \caption{The circles and squares represent the nodes at levels one and two, respectively.}\label{figure:(2,4)config}
\end{figure}

\subsection{(2,5) Balanced Configurations}
If $n=5$ then $Q(z)\equiv 0$ when
\[
\begin{split}
b_5&=0\Leftrightarrow a_4=\frac{25\alpha}{4}\\
b_4&=0\Leftrightarrow a_3=\frac{25(9\alpha^2-16)}{64}\\
b_3&=0\Leftrightarrow a_2=\frac{25\alpha(9\alpha^2-1040)}{2304}\\
b_2&=0\Leftrightarrow a_1=\frac{25(9\alpha^4-12704\alpha^2+20736)}{147456}\\
b_1&=0\Leftrightarrow a_0=\frac{\alpha(81\alpha^4-123552\alpha^2+1251584)}{589824}\\
b_0&=0\Leftrightarrow -\frac{(9\alpha^3-324\alpha^2-1040\alpha+576)(9\alpha^3+324\alpha^2-1040\alpha-576)}{2359296}=0.
\end{split}
\]
Because of the symmetries of the solutions, there are three balanced configurations corresponding to the positive solutions to the $b_0=0$ equation: $\alpha\approx0.48233788$, $\alpha\approx3.40867116$, or $\alpha\approx38.92633327$.  See figures \ref{figure:(2,5)config} and \ref{figure:(2,5)}.

\begin{figure}[h]
    \centering
    \begin{subfigure}[b]{0.3\textwidth}
       \centering
        \includegraphics[width=\textwidth]{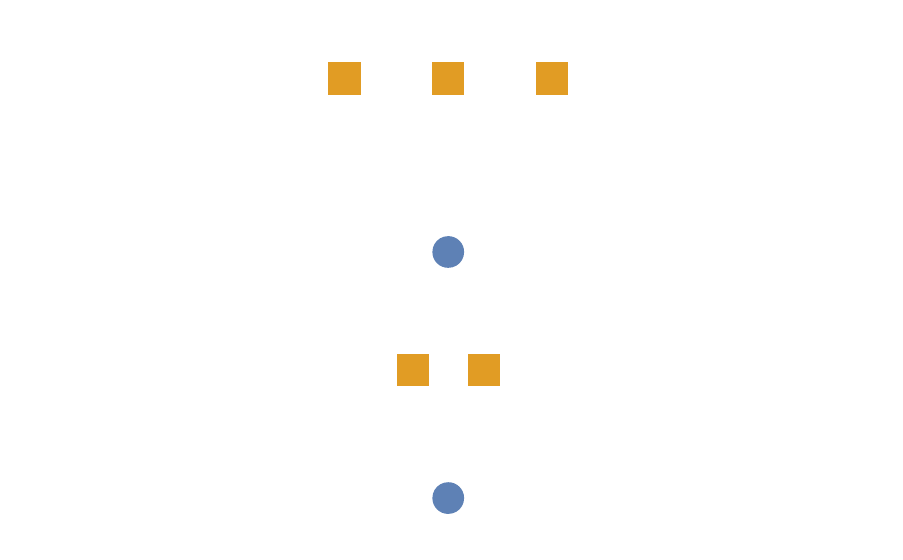}
        \caption{$\alpha\approx 0.48233788$}
        \label{figure:(2,5)a}
    \end{subfigure}
    ~ 
    \begin{subfigure}[b]{0.3\textwidth}
    	    \centering
        \includegraphics[width=\textwidth]{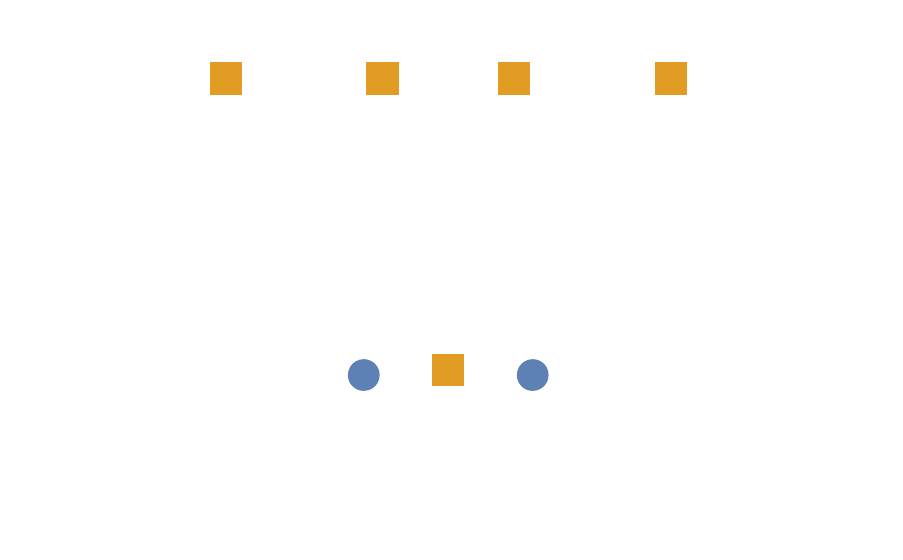}
        \caption{$\alpha\approx 3.40867116$}
        \label{figure:(2,5)b}
    \end{subfigure}
    ~ 
    \begin{subfigure}[b]{0.3\textwidth}
    	    \centering
        \includegraphics[width=\textwidth]{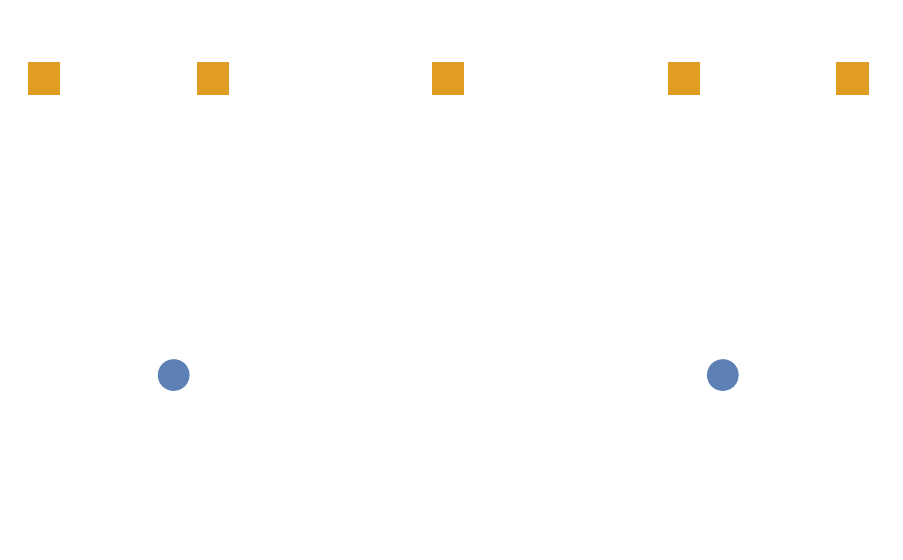}
        \caption{$\alpha\approx 38.92633327$}
        \label{figure:(2,5)c}
    \end{subfigure}
    \caption{$(2,5)$ balanced configurations}\label{figure:(2,5)config}
\end{figure}

\begin{figure}[h]
	\centerline{ 
		\includegraphics[height=2in]{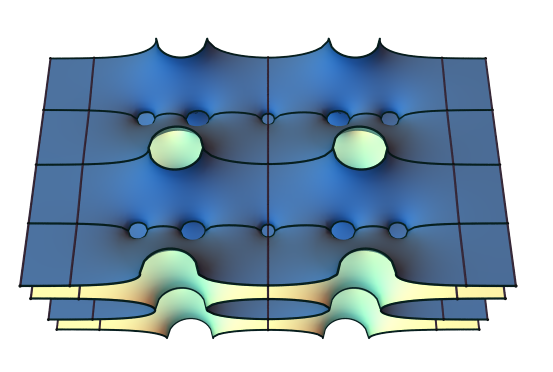}
			}
	\caption{Surface corresponding to the $(2,5)$ balanced configuration in figure \ref{figure:(2,5)c}}
	\label{figure:(2,5)}
\end{figure}

\subsection{(2,6) Balanced Configurations}
If $n=6$ then $Q(z)\equiv 0$ when
\[
\begin{split}
b_6&=0\Leftrightarrow a_5=9\alpha\\
b_5&=0\Leftrightarrow a_4=9(\alpha^2-1)\\
b_4&=0\Leftrightarrow a_3=\alpha(\alpha^2-26)\\
b_3&=0\Leftrightarrow a_2=-9(\alpha^2-1)\\
b_2&=0\Leftrightarrow a_1=-\frac{9}{25}(2\alpha^3-27\alpha)\\
b_1&=0\Leftrightarrow a_0=\frac{1}{25}(-2\alpha^4+52\alpha^2-25)\\
b_0&=0\Leftrightarrow \frac{1}{50}\alpha(\alpha^2-26)(\alpha^2-1)=0.
\end{split}
\]

So, $\alpha$ can be $0$, $\pm 1$, or $\pm \sqrt{26}$.  There are only two new configurations.  The $\alpha=0$ configuration is equivalent to the $(1,3)$ configuration, by a factor of $2$, the $\alpha=-\sqrt{26}$ configuration is equivalent to the $\alpha=\sqrt{26}$ configuration, by a translation of $\pi i$, and the $\alpha=-1$ configuration is equivalent to the $\alpha=1$ configuration, by a translation of $\pi i$.

\begin{figure}[h]
    \centering
     \begin{subfigure}[b]{0.3\textwidth}
    	    \centering
        \includegraphics[width=\textwidth]{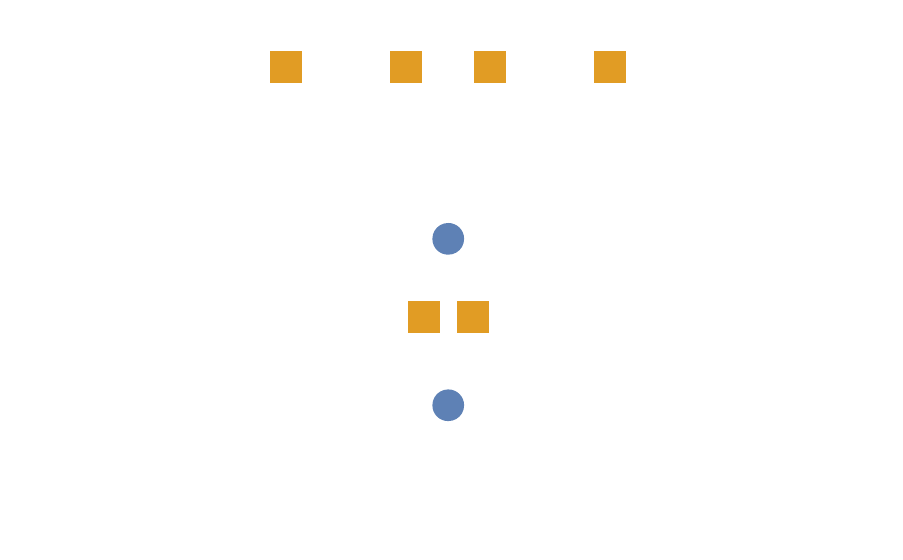}
        \caption{$\alpha=1$}
        \label{figure:(2,6)b}
    \end{subfigure}
    ~ 
    \begin{subfigure}[b]{0.3\textwidth}
    	    \centering
        \includegraphics[width=\textwidth]{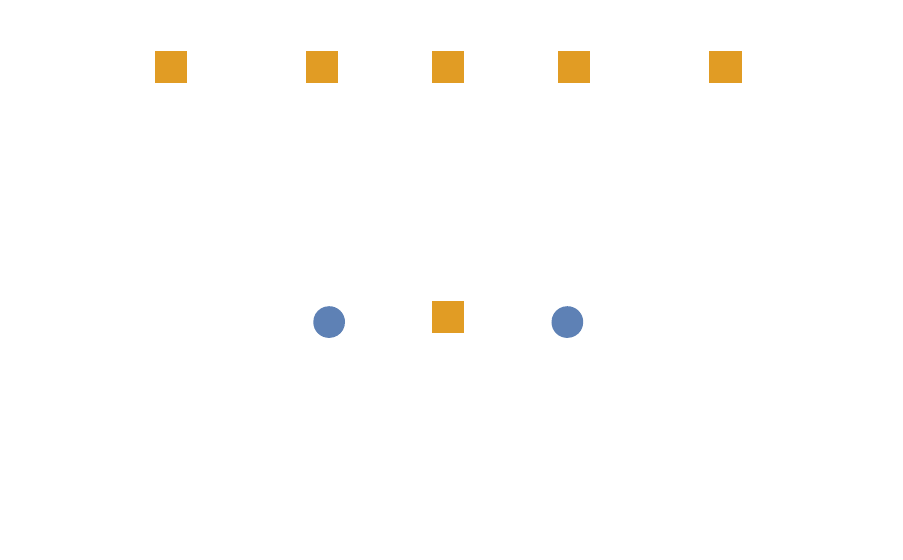}
        \caption{$\alpha=\sqrt{26}$}
        \label{figure:(2,6)c}
    \end{subfigure}
    \caption{$(2,6)$ balanced configurations}\label{figure:(2,6)config}
\end{figure}

\subsection{(2,7) Balanced Configurations}
If $n=7$ then $Q(z)\equiv 0$ when
\[
\begin{split}
b_7&=0\Leftrightarrow a_6=\frac{49\alpha}{4}\\
b_6&=0\Leftrightarrow a_5=\frac{49\alpha(25\alpha^2-16)}{64}\\
b_5&=0\Leftrightarrow a_4=\frac{49\alpha(25\alpha^2-272)}{256}\\
b_4&=0\Leftrightarrow a_3=\frac{49(25\alpha^4-10272\alpha^2+6400)}{16384}\\
b_3&=0\Leftrightarrow a_2=\frac{49\alpha(25\alpha^4-112672\alpha^2+1120512)}{1638400}\\
b_2&=0\Leftrightarrow a_1=\frac{49(25\alpha^6-122672\alpha^4+5229312\alpha^2-2560000)}{26214400}\\
b_1&=0\Leftrightarrow a_0=\frac{\alpha(625\alpha^6-3073200\alpha^4+159576832\alpha^2-350851072)}{104857600}\\
b_0&=0\Leftrightarrow 25\alpha^4-1600\alpha^3-10272\alpha^2+17408\alpha+6400=0\,\,\,\text{or}\\
&\hspace{.56in} 25\alpha^4+1600\alpha^3-10272\alpha^2-17408\alpha+6400=0.
\end{split}
\]
%
Because of the symmetries of the solutions, there are four balanced configurations corresponding to the positive solutions to the $b_0=0$ equation: $\alpha\approx0.312754$, $\alpha\approx1.65533$, $\alpha\approx7.08968$, or $\alpha\approx69.7471$.  See figures \ref{figure:(2,7)} and  \ref{figure:(2,7)config}.

\begin{figure}[h]
	\centerline{ 
		\includegraphics[height=2in]{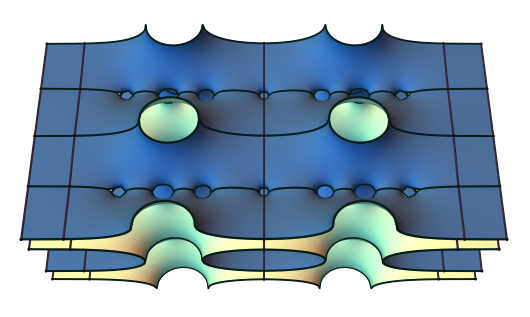}
			}
	\caption{Surface corresponding to the $(2,7)$ balanced configuration in figure \ref{figure:(2,7)d}}
	\label{figure:(2,7)}
\end{figure}

\begin{figure}[h]
    \centering
    \begin{subfigure}[b]{0.48\textwidth}
       \centering
        \includegraphics[width=\textwidth]{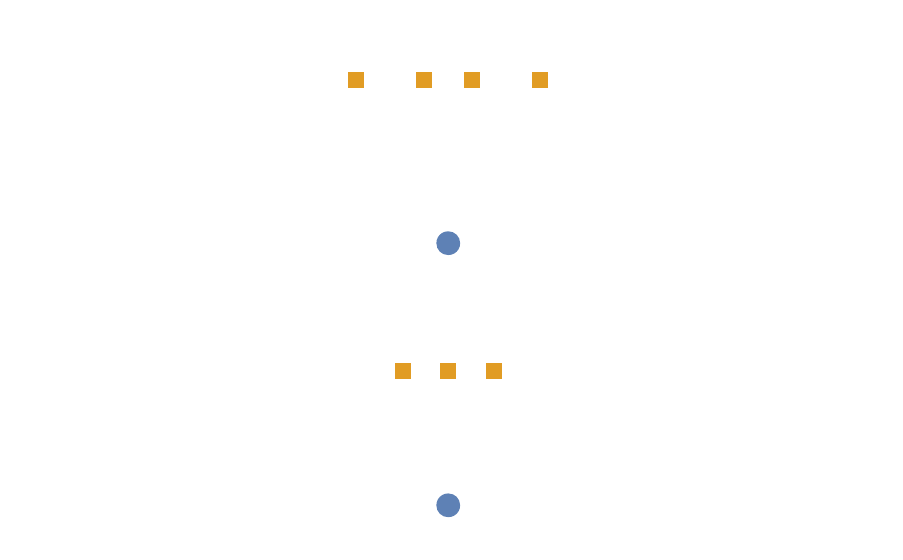}
        \caption{$\alpha\approx 0.312754$}
        \label{figure:(2,7)a}
    \end{subfigure}
    ~ 
    \begin{subfigure}[b]{0.48\textwidth}
    	    \centering
        \includegraphics[width=\textwidth]{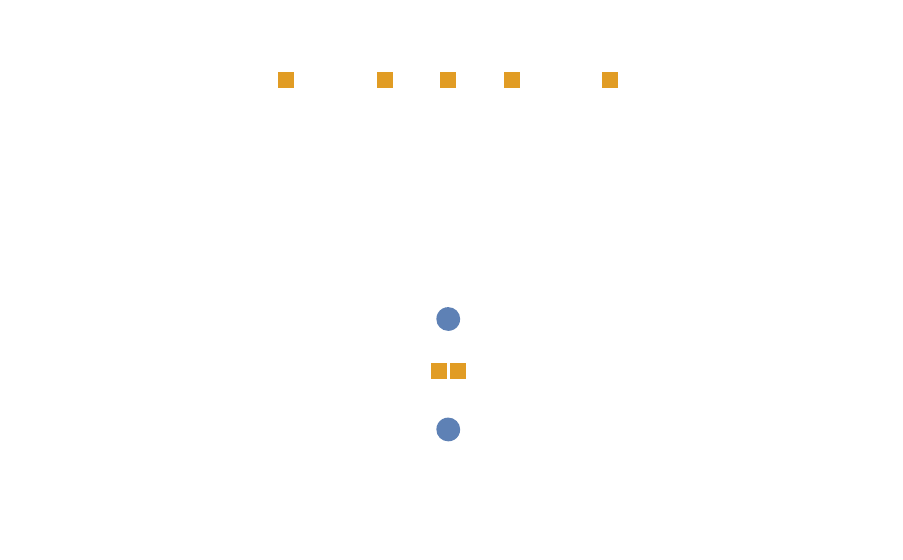}
        \caption{$\alpha\approx 1.65533$}
        \label{figure:(2,7)b}
    \end{subfigure}

    ~ 
    \begin{subfigure}[b]{0.48\textwidth}
    	    \centering
        \includegraphics[width=\textwidth]{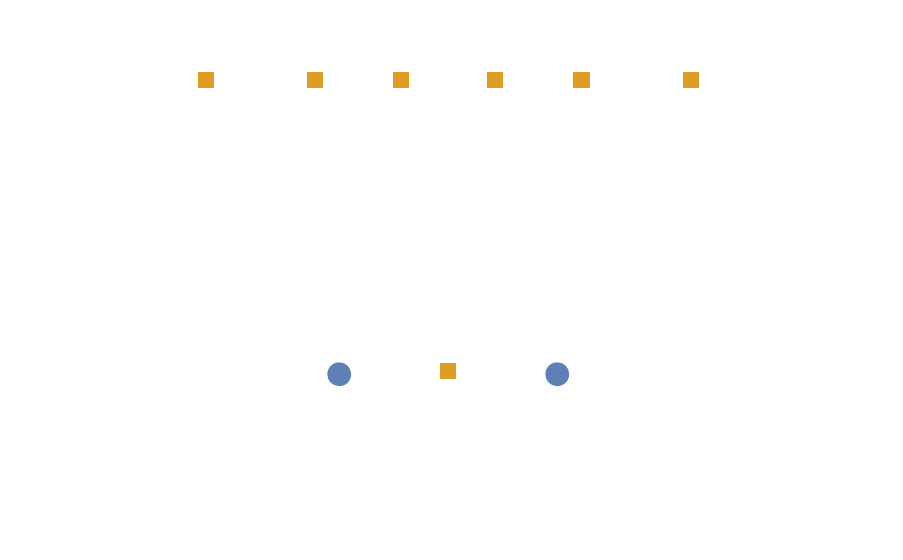}
        \caption{$\alpha\approx 7.08968$}
        \label{figure:(2,7)c}
    \end{subfigure}
    ~ 
    \begin{subfigure}[b]{0.48\textwidth}
    	    \centering
        \includegraphics[width=\textwidth]{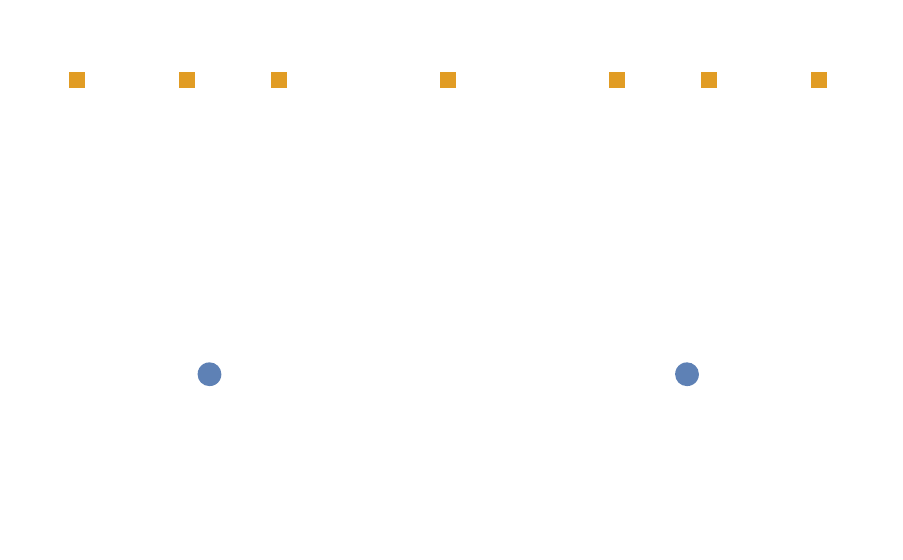}
        \caption{$\alpha\approx 69.7471$}
        \label{figure:(2,7)d}
    \end{subfigure}
    \caption{$(2,7)$ balanced configurations}
    \label{figure:(2,7)config}
\end{figure}

\section{(3,4) Configurations}
The $(3,4)$ balance equations are the smallest for which there is a balanced configuration such that the location of the nodes has no nontrivial symmetries.  Here,
\[
p_1(z)=z^3+(a_1-1)z^2-(a_1+a_0)z+a_0,\,\, p_2(z)=z^4+b_3z^3+b_2z^2+b_1z+b_0,
\]
where we assume that $p_1(1)=0$.  $Q(z)$ is a polynomial of degree six with coefficients $c_k$ such that
\[
\begin{split}
c_0&=0\Leftrightarrow -16a_0b_0-16a_1b_0+9a_0b_1=0\\
c_1&=0\Leftrightarrow -64b_0+64a_1b_0-a_0b_1-a_1b_1+36a_0b_2=0\\
c_2&=0\Leftrightarrow 144b_0-25b_1+25 a_1b_1-4a_0b_2-4a_1b_2+81a_0b_3=0\\
c_3&=0\Leftrightarrow 144a_0+81b_1-4b_2+4a_1b_2-25a_0b_3-25a_1b_3=0\\
c_4&=0\Leftrightarrow -64a_0-64a_1+36b_2-b_3+a_1b_3=0\\
c_5&=0\Leftrightarrow -16+16a_1+9b_3=0.
\end{split}
\]
As with the $(2,n)$ balance equations, we can solve iteratively, starting with $c_5=0$ down to $c_0=0$:
\[
\begin{split}
c_5=&0\Leftrightarrow b_3=\frac{16(1-a_1)}{9}\\
c_4=&0\Leftrightarrow b_2=\frac{4\left(1+36a_0+34a_1+a_1^2\right)}{81}\\
c_3=&0\Leftrightarrow b_1=\frac{16\left(1-468a_0+258a_1-261a_0a_1-258a_1^2-a_1^3\right)}{6561}\\
c_2=&0\Leftrightarrow b_0=\frac{25-70668a_0+2916a_0^2+6506a_1+69894a_0a_1-10146a_1^2}{59049}\\
&\hspace{.8in}+\frac{6606a_0a_1^2+6506a_1^3+25a_1^4}{59049}.
\end{split}
\]
This reduces the coefficients $c_1$ and $c_0$ of $Q(z)$ to fifth degree polynomials in $a_0$ and $a_1$.  Solving $c_0=c_1=0$ numerically, there are four distinct balanced configurations.  See figures \ref{figure:(3,4)config} and \ref{figure:(3,4)}.  The smallest balanced configuration with no non-trivial symmetries is shown in figure \ref{figure:(3,4)a}.

\begin{figure}[h]
    \centering
    \begin{subfigure}[b]{0.23\textwidth}
       \centering
        \includegraphics[width=\textwidth]{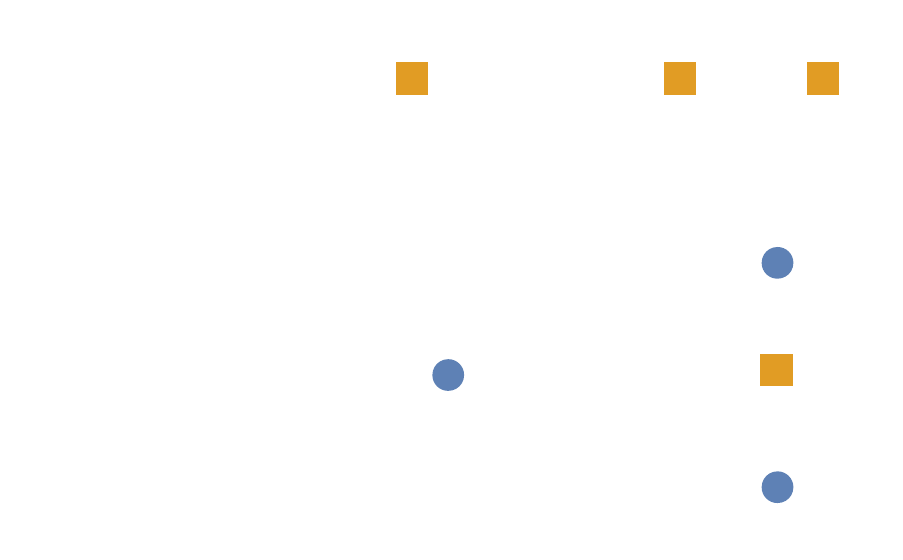}
        \caption{}
        \label{figure:(3,4)a}
    \end{subfigure}
    ~ 
    \begin{subfigure}[b]{0.23\textwidth}
    	    \centering
        \includegraphics[width=\textwidth]{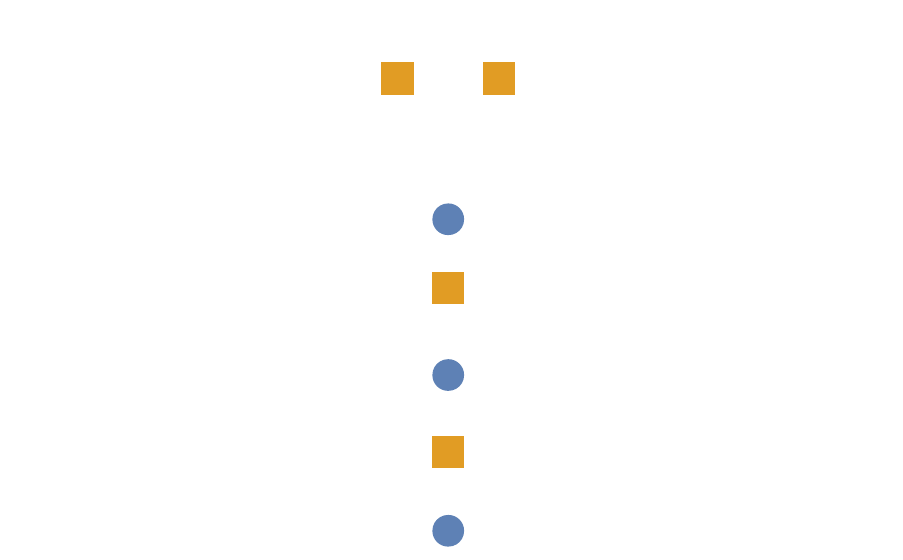}
        \caption{}
        \label{figure:(3,4)b}
    \end{subfigure}
    ~ 
    \begin{subfigure}[b]{0.23\textwidth}
    	    \centering
        \includegraphics[width=\textwidth]{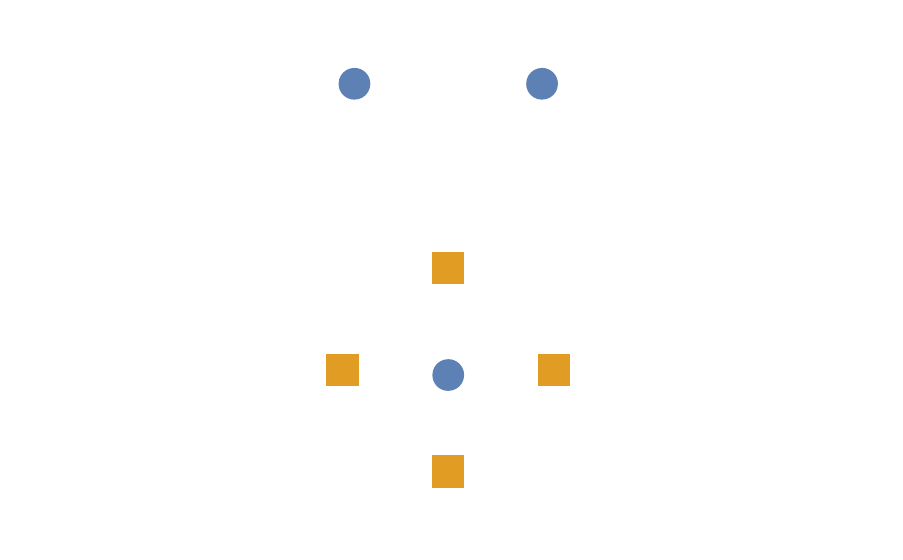}
        \caption{}
        \label{figure:(3,4)c}
    \end{subfigure}
    ~ 
    \begin{subfigure}[b]{0.23\textwidth}
    	    \centering
        \includegraphics[width=\textwidth]{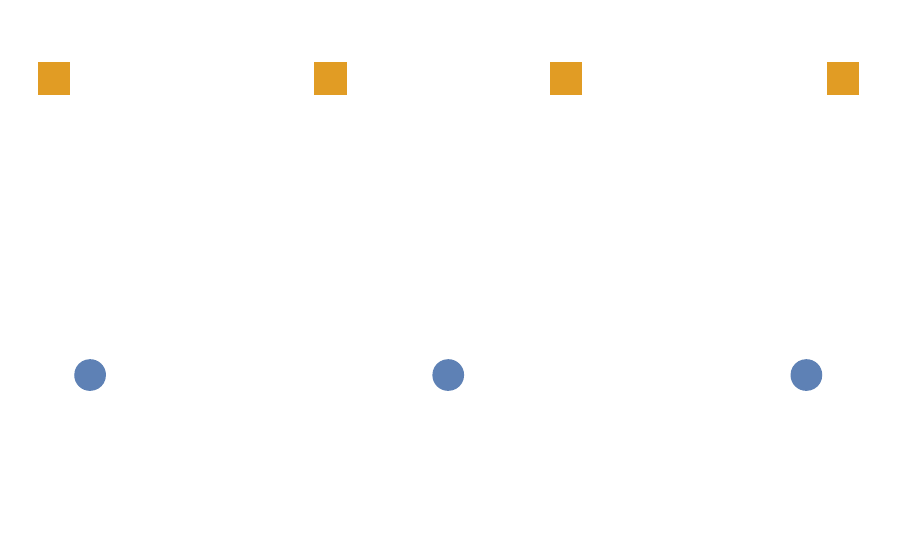}
        \caption{}
        \label{figure:(3,4)d}
    \end{subfigure}
    \caption{$(3,4)$ balanced configurations}
    \label{figure:(3,4)config}
\end{figure}

\begin{figure}[h]
	\centerline{ 
		\includegraphics[height=2in]{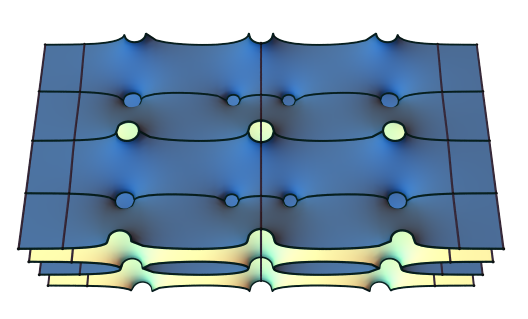}
			}
	\caption{Surface corresponding to the $(3,4)$ balanced configuration in figure \ref{figure:(3,4)d}}
	\label{figure:(3,4)}
\end{figure}

\bibliographystyle{plain}
\bibliography{balance_equations}

\end{document}